\numberwithin{equation}{section}
\newtheorem{theorem}{Theorem}[section]
\newtheorem{lemma}[theorem]{Lemma}
\newtheorem{corollary}[theorem]{Corollary}
\theoremstyle{definition}
\newtheorem{definition}[theorem]{Definition}
\theoremstyle{remark}
\newtheorem{example}[theorem]{Example}
\newcommand{\Spec}{\operatorname{Spec}}
\newcommand{\id}{\operatorname{id}}
\newcommand{\pd}{\operatorname{pd}}
\newcommand{\Ext}{\operatorname{Ext}}
\newcommand{\Supp}{\operatorname{Supp}}
\newcommand{\Tor}{\operatorname{Tor}}
\newcommand{\Hom}{\operatorname{Hom}}
\newcommand{\lo}{\longrightarrow}
\newcommand{\fm}{\mathfrak{m}}
\newenvironment{prf}[1][Proof]{\begin{proof}[\bf #1]}{\end{proof}}
\DeclareMathSymbol{\perp}{\mathrel}{symbols}{"3F}
\begin{document}

\author[K. Abolfath Beigi, K. Divaani-Aazar and M. Tousi]{Kosar Abolfath Beigi, Kamran Divaani-Aazar and
Massoud Tousi}

\title[The balance of relative Ext ...]
{The balance of relative Ext groups defined by semi dualizing modules}

\address{K. Abolfath Beigi, Department of Mathematics, Faculty of Mathematical Sciences, Alzahra
University, Tehran, Iran.}
\email{kosarabolfath@gmail.com}

\address{K. Divaani-Aazar, Department of Mathematics, Faculty of Mathematical Sciences, Alzahra
University, Tehran, Iran-and-School of Mathematics, Institute for Research in Fundamental Sciences (IPM),
P.O. Box 19395-5746, Tehran, Iran.}
\email{kdivaani@ipm.ir}

\address{M. Tousi, Department of Mathematics, Faculty of Mathematical Sciences, Shahid Beheshti University,
Tehran, Iran.}
\email{mtousi@ipm.ir}

\subjclass[2020]{13C05; 13D05; 13D07}

\keywords{Auslander classes; Bass classes; $C$-flat modules; $C$-injective modules; $C$-projective modules;
dualizing modules; semidualizing modules}

\begin{abstract} Let $R$ be a commutative Noetherian ring with identity and $C$ a semidualizing module for $R$.
Let $\mathscr{P}_C(R)$ and $\mathscr{I}_C (R)$ denote, respectively, the classes of $C$-projective and $C$-injective
$R$-modules. We show that their induced Ext bifunctors $\Ext^i_{\mathscr{P}_C}(-,\sim)$ and $\Ext^i_{\mathscr{I}_C}
(-,\sim)$ coincide for all $i\geq 0$ if and only if $C$ is projective. Also, we provide some other criteria for $C$
to be projective by using some special cotorsion theories.
\end{abstract}

\maketitle

\section{Introduction}

Throughout this paper, $R$ is a commutative Noetherian ring with nonzero identity.  Foxby \cite{3}, Vasconcelos
\cite{9} and Golod \cite{4}, independently, initiated the study of semidualizing modules under other names. The
notion of semidualizing modules is a generalization of that of dualizing modules, in fact, a dualizing module is
a semidualizing module with finite injective dimension.

The aim of this paper is to establish some characterizations of projective semidualizing modules. (It is known
that every projective semidualizing module has rank 1.) To achieve this goal, we apply two approaches. In Section
3, we use relative homologies defined by semidualizing modules, and in Section 4, we use special cotorsion theories.

Let $C$ be a semidualizing module for $R$. Enochs, Holm, White and Takahashi investigated the classes of $C$-projective,
$C$-flat and $C$-injective $R$-modules, denoted, respectively, by $\mathscr{P}_C(R)$, $\mathscr{F}_C(R)$ and
$\mathscr{I}_C(R)$; see \cite{2},\cite{5} and \cite{8}. The classes $\mathscr{P}_C(R)$ and $\mathscr{I}_C(R)$
can be used for defining the relative Ext bifunctors $\Ext^i_{\mathscr{P}_C} (-,\sim)$ and $\Ext^i_{\mathscr{I}_C}(-,\sim)$,
which are extensively examined by several authors; see e. g.  \cite{7} and \cite{8}. In particular, Sather-Wagstaff,
Sharif and White \cite{7} proved that $\Ext^i_{\mathscr{P}_C} (M,N)$ is not isomorphic to $\Ext^i_{\mathscr{I}_C}(M,N)$
in general. We show that $\Ext^i_{\mathscr{P}_C}(M,N)\cong \Ext^i_{\mathscr{I}_C}(M,N)$ for all $R$-modules $M$ and $N$
and all $i\geq 0$ if and only if $C$ is projective; see Corollary \ref{3.6}.

There is a rich theory of cotorsion theories. They are venerable tools for studying precovering and preenveloping classes
of $R$-modules. We show that $C$ is projective if and only if any of the pairs $(\mathscr{P}_C (R),\mathscr{P}_C (R)^{\perp})$,
$(\mathscr{F}_C (R),\mathscr{F}_C(R)^{\perp})$ or $(^{\perp}\mathscr{I}_C (R),\mathscr{I}_C (R))$ is a cotorsion theory; see
Theorem \ref{4.3}.

\section{Prerequisites}

We start with recalling the following three definitions.

\begin{definition} A finitely generated $R$-module $C$ is called {\it semidualizing}, if it satisfies the
following conditions:
\begin{enumerate}
\item[(i)] The homothety map $\chi_C^R:R \lo \Hom_R(C,C)$ is an isomorphism, and
\item[(ii)] $\Ext^i_R(C,C)=0$ for all $i>0$.
\end{enumerate}
\end{definition}

\begin{definition} Let $C$ be a  semidualizing module for $R$.
\begin{enumerate}
\item[(i)] The {\it Auslander class} $\mathscr{A}_C(R)$ is the class of all $R$-modules $M$ for which the natural map
$$\gamma_M^C:M\lo \Hom_R(C,C\otimes_RM)$$ is an isomorphism, and $\Tor^R_i(C,M)=0=\Ext_R^i(C,C\otimes_RM)$ for all $i>0$.
\item[(ii)] The {\it Bass class} $\mathscr{B}_C(R)$ is the class of all $R$-modules $M$ for which the evaluation map
$$\xi_M^C:C\otimes_R\Hom_R(C,M)\lo M$$ is an isomorphism, and $\Ext^i_R(C,M)=0=\Tor^R_i(C,\Hom_R(C,M))$ for all $i>0$.
\end{enumerate}
\end{definition}

Let $\mathscr{P}(R)$, $\mathscr{F}(R)$ and $\mathscr{I}(R)$ denote, respectively, the full subcategories of projective, flat
and injective $R$-modules. Clearly, $\mathscr{F}(R)\subseteq \mathscr{A}_C(R)$ and $\mathscr{I}(R)\subseteq \mathscr{B}_C(R)$.
Also, by Foxby equivalence, there is an equivalence of categories:

\begin{displaymath}
\xymatrix{\mathscr{A}_C\left(R\right) \ar@<0.7ex>[rrr]^-{C\otimes_R-} &
{} & {} & \mathscr{B}_C\left(R\right).  \ar@<0.7ex>[lll]^-{\Hom_R\left(C,-\right)}}
\end{displaymath}

\begin{definition} Let $C$ be a  semidualizing module for $R$. The classes of $C$-{\it projective},
$C$-{\it flat} and $C$-{\it injective} $R$-modules are defined by
$$\mathscr{P}_C(R):=\{ C\otimes_R P \mid P\in \mathscr{P}(R)\},$$
$$\mathscr{F}_C(R):=\{ C\otimes_R F \mid F\in \mathscr{F}(R)\},$$
$$\mathscr{I}_C(R):=\{ \Hom_R(C,I) \mid I\in \mathscr{I}(R)\}.$$
\end{definition}

Every $R$-module $M$ admits an {\it augmented proper $\mathscr{P}_C$-projective resolution}. That is a complex
$$X^+=\cdots \longrightarrow C\otimes_R P_n \longrightarrow \cdots \longrightarrow C\otimes_R P_0 \longrightarrow M
\longrightarrow 0$$ such that every $P_i$ is projective, and the complex
$$\Hom_R(C,X^+)=\cdots \longrightarrow P_n \longrightarrow \cdots \longrightarrow  P_0 \longrightarrow \Hom_R(C,M)
\longrightarrow 0$$ is exact. The truncated complex $$X=\cdots \longrightarrow C\otimes_R P_n \longrightarrow \cdots
\longrightarrow C\otimes_R P_0 \longrightarrow 0$$ is called a {\it proper $\mathscr{P}_C$-projective resolution}
of $M$. The {\it $\mathscr{P}_C$-projective dimension} of $M$ is defined by
$$\mathscr{P}_C\text{-}\pd_R(M):=\inf\{\sup\{n|X_n\neq 0\} \mid~ X~\text{is}~\text{a} ~\text{proper}~
\mathscr{P}_C\text{-}\text{projective}~\text{resolution}~\text{of}~M\}.$$

Every $R$-module $M$ admits an {\it augmented proper $\mathscr{I}_C$-injective resolution}. That is a complex
$$Y^+=0 \longrightarrow M \longrightarrow \Hom_R(C,I^0) \longrightarrow \Hom_R(C,I^1) \longrightarrow \cdots
\longrightarrow \Hom_R(C,I^n)\longrightarrow \cdots$$ such that every $I^i$ is injective, and the complex
$$C\otimes_R Y^+=0\longrightarrow C\otimes_RM\longrightarrow I^0 \longrightarrow I^1\longrightarrow \cdots
\longrightarrow I^n \longrightarrow \cdots$$ is exact. The truncated complex $$Y=0 \longrightarrow
\Hom_R(C,I^0)\longrightarrow\Hom_R(C,I^1)\longrightarrow \cdots \longrightarrow \Hom_R(C,I^n)\longrightarrow \cdots$$
is called a {\it proper $\mathscr{I} _C$-injective resolution} of $M$. The {\it $\mathscr{I} _C$-injective dimension}
of $M$ is defined by $$\mathscr{I}_C\text{-}\id_R(M):= \inf\{\sup\{ n | Y^n\neq0 \} \mid~ Y~\text{is}~\text{a}~\text{proper}
~\mathscr{I}_C\text{-}\text{injective}~\text{resolution}~\text{of}~M\}.$$

Let $M$ and $N$ be two $R$-modules. Assume that $L$ and $J$ are, respectively, a proper $\mathscr{P}_C$-projective
resolution of $M$ and a proper $\mathscr{I}_C$-injective resolution of $N$. For each non-negative integer $n$,
the $n$th relative Ext modules of $M$ and $N$ are defined by $$\Ext^n_{\mathscr{P}_C}(M,N):=\text{H}_{-n}(\Hom_R(L,N)),
\ \text{and}$$
$$\Ext^n_{\mathscr{I}_C}(M,N):=\text{H}_{-n}(\Hom_R(M,J)).$$

The following result reveals some important properties of $\mathscr{P}_C$-projective and $\mathscr{I}_C$-injective dimensions.

\begin{lemma}\label{2.4} (See \cite[Theorems 3.2 and 4.1 and Corollary 2.10]{8}.) Let $C$ be a semidualizing module for
$R$ and $M$ and $N$ two $R$-modules. Then
\begin{itemize}
\item[(i)]  $M$ is a nonzero $C$-projective $R$-module if and only if $\mathscr{P}_C\text{-}\pd_R(M)=0$.
\item[(ii)] $M$ is a nonzero $C$-injective $R$-module if and only if $\mathscr{I}_C\text{-}\id_R(M)=0$.
\item[(iii)] $\mathscr{P}_C\text{-}\pd_R(M)=\sup \{n\in \mathbb{N}_0 \mid \Ext^n_{\mathscr{P}_C}(M,-)\neq 0\}$.
\item[(iv)] $\mathscr{I}_C\text{-}\id_R(M)=\sup \{n\in \mathbb{N}_0 \mid \Ext^n_{\mathscr{I}_C}(-,M)\neq 0\}$.
\item[(v)] $\Ext^n_{\mathscr{P}_C}(M,N)\cong \Ext^n_R(\Hom_R(C,M),\Hom_R(C,N))$ for all $n\geq 0$.
\item[(vi)] $\Ext^n_{\mathscr{I}_C}(M,N)\cong \Ext^n_R(C\otimes_RM,C\otimes_RN)$ for all $n\geq 0$.
\end{itemize}
\end{lemma}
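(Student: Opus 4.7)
The plan is to push every assertion through the Foxby equivalence $C\otimes_R-\colon \mathscr{A}_C(R)\rightleftarrows \mathscr{B}_C(R)\colon \Hom_R(C,-)$, converting $\mathscr{P}_C$-projective resolutions into ordinary projective resolutions of $\Hom_R(C,M)$ and $\mathscr{I}_C$-injective resolutions into ordinary injective resolutions of $C\otimes_R N$. Once that translation is in place, items (i)--(vi) reduce to standard homological algebra over $R$.

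I would establish (v) and (vi) first, since the other parts rest on them. For (v), pick a proper $\mathscr{P}_C$-projective resolution $L=\cdots\to C\otimes_R P_1\to C\otimes_R P_0$ of $M$. Hom--tensor adjunction supplies a natural isomorphism of complexes $\Hom_R(L,N)\cong \Hom_R(\tilde L,\Hom_R(C,N))$, where $\tilde L=\cdots\to P_1\to P_0$. Because projectives lie in $\mathscr{A}_C(R)$, the identification $P_i\cong\Hom_R(C,C\otimes_R P_i)$ is valid, and the properness condition on $L$ is exactly the assertion that $\tilde L^+\colon\cdots\to P_0\to \Hom_R(C,M)\to 0$ is exact. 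Thus $\tilde L$ is a genuine projective resolution of $\Hom_R(C,M)$, and passing to cohomology yields (v). Statement (vi) is dual: use $\mathscr{I}(R)\subseteq\mathscr{B}_C(R)$ to see that $C\otimes_R J$ is an injective resolution of $C\otimes_R N$, and apply the adjunction $\Hom_R(M,\Hom_R(C,I))\cong\Hom_R(C\otimes_R M,I)$.

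Parts (i) and (ii) then fall out almost immediately. A $C$-projective module admits the trivial resolution, so the forward direction is automatic. Conversely, $\mathscr{P}_C\text{-}\pd_R(M)=0$ produces a length-zero proper resolution $0\to C\otimes_R P_0\to M\to 0$; applying $\Hom_R(C,-)$ identifies $P_0\cong\Hom_R(C,M)$, and the standard fact that every module of finite $\mathscr{P}_C$-projective dimension lies in $\mathscr{B}_C(R)$ gives $M\cong C\otimes_R\Hom_R(C,M)\cong C\otimes_R P_0\in\mathscr{P}_C(R)$. Item (ii) is symmetric, using $\mathscr{A}_C(R)$ and the evaluation map.

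For (iii), the correspondence hidden in the proof of (v) upgrades to the equality $\mathscr{P}_C\text{-}\pd_R(M)=\pd_R(\Hom_R(C,M))$, so writing $X:=\Hom_R(C,M)$ it suffices to show $\pd_R(X)=\sup\{n:\Ext^n_R(X,\Hom_R(C,N))\neq 0 \text{ for some }N\}$. The upper bound is immediate. For the lower bound, syzygies of $X$ remain in $\mathscr{A}_C(R)$; a standard dimension-shift then produces, at $n=\pd_R(X)$, a nonzero element of $\Ext^n_R(X,Y)$ with $Y\in\mathscr{A}_C(R)$, and Foxby equivalence rewrites $Y$ as $\Hom_R(C,N)$ for $N:=C\otimes_R Y\in\mathscr{B}_C(R)$. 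Item (iv) is the dual statement. The main technical hurdle I anticipate is precisely the resolving/coresolving stability of $\mathscr{A}_C(R)$ and $\mathscr{B}_C(R)$ under kernels of surjections (respectively cokernels of injections) of their own members; without this closure, neither the construction of resolutions inside the classes nor the dimension-shift argument above would go through.
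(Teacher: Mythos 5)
The paper offers no proof of this lemma at all---it is quoted verbatim from Takahashi--White \cite{8}---so what is being judged is your reconstruction. Your treatment of (v) and (vi) is correct and is the standard argument: applying $\Hom_R(C,-)$ to an augmented proper $\mathscr{P}_C$-resolution and using $P_i\cong\Hom_R(C,C\otimes_RP_i)$ (projectives lie in $\mathscr{A}_C(R)$) turns it into a projective resolution of $\Hom_R(C,M)$, and adjunction then gives the isomorphism; dually for (vi). Parts (i) and (ii) are also fine, granting the (legitimately citable, but nontrivial) fact that finite $\mathscr{P}_C$-projective dimension forces membership in $\mathscr{B}_C(R)$, and the identity $\mathscr{P}_C\text{-}\pd_R(M)=\pd_R(\Hom_R(C,M))$ that you extract from the resolution correspondence is correct. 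The genuine gap is in your lower-bound argument for (iii), and dually for (iv).

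The pivot of that argument---``syzygies of $X:=\Hom_R(C,M)$ remain in $\mathscr{A}_C(R)$''---is false in general. The Auslander class is closed under kernels of epimorphisms \emph{between its own members}, but $X$ itself lies in $\mathscr{A}_C(R)$ only when $M\in\mathscr{B}_C(R)$, which fails for arbitrary $M$: take $(R,\fm,k)$ local with $C\ncong R$; then $\pd_RC=\infty$, so $\Tor^R_i(C,k)\neq 0$ for all $i$, hence $X=\Hom_R(C,k)\cong k^{\mu_R(C)}\notin\mathscr{A}_C(R)$, and a first syzygy $W$ of $X$ has $\Tor^R_i(C,W)\cong\Tor^R_{i+1}(C,k^{\mu_R(C)})\neq0$, so $W\notin\mathscr{A}_C(R)$ either. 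Thus your dimension shift does not produce a test module $Y\in\mathscr{A}_C(R)$. When $p:=\pd_RX$ is finite the conclusion can be rescued by a different observation: the $p$-th syzygy $\Omega^pX$ is then projective, hence lies in $\mathscr{A}_C(R)$ and equals $\Hom_R(C,N)$ for $N:=C\otimes_R\Omega^pX\in\mathscr{P}_C(R)$, and the non-split sequence $0\to\Omega^pX\to P_{p-1}\to\Omega^{p-1}X\to0$ gives $\Ext^p_R(X,\Omega^pX)\neq0$. But your sketch says nothing about the case $\mathscr{P}_C\text{-}\pd_R(M)=\infty$, where one must exhibit nonvanishing $\Ext^n_{\mathscr{P}_C}(M,N_n)$ in arbitrarily high degrees: there the syzygies are neither projective nor in $\mathscr{A}_C(R)$, and testing against $\Hom_R(C,I)$ with $I$ faithfully injective only detects the degrees where $\Tor^R_n(C,X)\neq0$. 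Some further device is needed---for instance, pushing out the syzygy sequence along the unit $\gamma_{\Omega^{n+1}X}\colon\Omega^{n+1}X\to\Hom_R(C,C\otimes_R\Omega^{n+1}X)$ and noting, via adjunction, that the resulting class in $\Ext^{n+1}_R(X,\Hom_R(C,C\otimes_R\Omega^{n+1}X))$ vanishes only if $C\otimes_R\Omega^{n+1}X\to C\otimes_RP_n$ is a split monomorphism---which is precisely the sort of work the cited theorems of \cite{8} carry out. The same objection applies verbatim to your dual argument for (iv).
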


The Picard group of $R$ is the set $\operatorname{Pic}(R)$ of all isomorphism classes of finitely generated projective
$R$-modules of rank $1$. The isomorphism class of a given finitely generated projective $R$-module $P$ of rank $1$ is
denoted by $[P]$. The set $\operatorname{Pic}(R)$ has the structure of an Abelian group with the multiplication
$[P][Q]:=[P\otimes_RQ]$. The identity element of $\operatorname{Pic}(R)$ is $[R]$, and inverses are given by the formula
$[P]^{-1}=[\Hom_R(P,R)]$. According to \cite[Corollary 2.2.5]{6} any finitely generated projective $R$-module of rank 1
is semidualizing. Consequently, every ring with non-trivial Picard group possesses a projective semidualizing
module which is not free.

Let $\mathcal{G}_0(R)$ denote the set of all isomorphism classes of semidualizing $R$-modules. Then $\operatorname{Pic}(R)
\subseteq \mathcal{G}_0(R)$. There is a well-defined action of $\operatorname{Pic}(R)$ on $\mathcal{G}_0(R)$ given by
$[P][C]:=[C\otimes_RP]$. The symbol $\approx$ stands for the equivalence relation on $\mathcal{G}_0(R)$ that induced by
this action. So, for $[B],[C]\in \mathcal{G}_0(R)$, we have $[B]\approx [C]$, if there is an element $P\in \operatorname{Pic}(R)$
such that $C\cong B\otimes_RP$. If $B$ and $C$ are two semidualizing modules for $R$ such that $C\cong B\otimes_RF$ for some
flat $R$-module $F$, then the equivalence (i) $\Leftrightarrow$ (ii) in \cite[Proposition 4.1.4]{6} yields that $[B]\approx [C]$.

\section{Balancedness}

We begin with the following useful result. Recall that an $R$-module $I$ is called {\it faithfully injective} if the functor
$\Hom_R(-,I)$ is faithfully exact. A faithfully injective module is obviously injective. Any ring possesses a faithfully
injective module; see e.g. \cite[Definition A.2.2 and Example A.2.3]{6}.

\begin{lemma}\label{3.1} Let $C$ be a semidualizing module for $R$. The following are equivalent:
\begin{itemize}
\item[(i)] $C$ is projective.
\item[(ii)] $\pd_RC< \infty$.
\item[(iii)]  $R$ is $C$-projective.
\item[(iv)]  $\mathscr{P}_C\text{-}\pd_R(R)<\infty$.
\item[(v)] $[C]\approx [R]$.
\item[(vi)] There exists a faithfully injective $R$-module $E$ which is $C$-injective.
\item[(vii)] $C_{\fm}\cong R_{\fm}$ for every maximal ideal $\fm$ of $R$.
\end{itemize}
Furthermore, if $C$ is projective, then both of $\mathscr{A}_C(R)$ and $\mathscr{B}_C(R)$ are the class of all
$R$-modules.
\end{lemma}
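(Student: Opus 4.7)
The plan is to arrange the seven conditions as a network centred on (i): trivial implications, two returns via Foxby/depth-type arguments, and one Matlis-duality argument, with the furthermore statement extracted from $(i)$. I would work along the skeleton $(i)\Rightarrow(ii)\Rightarrow(i)$, $(i)\Rightarrow(iii)\Rightarrow(iv)\Rightarrow(i)$, $(i)\Leftrightarrow(v)\Leftrightarrow(vii)$, and $(i)\Leftrightarrow(vi)$.

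Several steps are routine. $(i)\Rightarrow(ii)$ is tautological and $(iii)\Rightarrow(iv)$ follows from Lemma~\ref{2.4}(i). For $(i)\Leftrightarrow(v)$, the identity $\Hom_R(C,C)=R$ forces a projective semidualizing $C$ to have rank one at every localization, hence lie in $\operatorname{Pic}(R)$, while $[C]\approx[R]$ directly yields $C\cong P$ with $P\in\operatorname{Pic}(R)$ projective. For $(i)\Rightarrow(iii)$, $C$ invertible gives $R\cong C\otimes_R\Hom_R(C,R)$ with $\Hom_R(C,R)$ projective, i.e.\ $R\in\mathscr{P}_C(R)$. For $(i)\Leftrightarrow(vii)$, the standard local criterion for finitely generated projective modules combined with the rank-one observation forced locally by $\Hom_{R_\fm}(C_\fm,C_\fm)=R_\fm$ does the job. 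For $(ii)\Rightarrow(i)$, I would localize and apply the Auslander--Buchsbaum formula to the semidualizing module $C_\fm$, using the standard fact $\depth_{R_\fm}C_\fm=\depth R_\fm$ to force $\pd_{R_\fm}C_\fm=0$.

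For $(iv)\Rightarrow(i)$, finiteness of $\mathscr{P}_C\text{-}\pd_R(R)$ places $R$ in $\mathscr{B}_C(R)$, so the evaluation $\xi^C_R:C\otimes_R\Hom_R(C,R)\to R$ is an isomorphism. Localizing at a maximal $\fm$ and reducing modulo $\fm$ shows that the residue-field dimensions of $C_\fm$ and $\Hom_R(C,R)_\fm$ multiply to one; Nakayama then makes $C_\fm$ cyclic, and the semidualizing identity $\Hom_{R_\fm}(C_\fm,C_\fm)=R_\fm$ forces $C_\fm\cong R_\fm$. For $(i)\Rightarrow(vi)$, with $C$ finitely generated projective $C$ is a direct summand of a finite free module, so $C\otimes_R E$ is a direct summand of $E^n$ and hence injective for any injective $E$; combined with invertibility of $C$ this gives $E\cong\Hom_R(C,C\otimes_R E)\in\mathscr{I}_C(R)$ for every faithfully injective $E$.

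The main obstacle is the converse $(vi)\Rightarrow(i)$, which I would handle by Matlis duality after reducing to the local case. Since $\mathscr{I}_C(R)$ is closed under direct summands (via the Foxby equivalence $\Hom_R(C,-):\mathscr{B}_C(R)\to\mathscr{A}_C(R)$ restricted to injectives), the injective hull $E_{R_\fm}(k)$ of the residue field $k=R_\fm/\fm R_\fm$, appearing as a summand of the localization $E_\fm$, lies in $\mathscr{I}_{C_\fm}(R_\fm)$, so $E_{R_\fm}(k)\cong\Hom_{R_\fm}(C_\fm,J)$ for some injective $R_\fm$-module $J$. The Foxby isomorphism $J\cong C_\fm\otimes_{R_\fm}E_{R_\fm}(k)$ shows $J$ has support $\{\fm R_\fm\}$, so $J$ is a direct sum of copies of $E_{R_\fm}(k)$; Artinianness of $E_{R_\fm}(k)$ cuts the sum down to finitely many copies, say $s\geq 1$. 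Matlis-dualizing over the $\fm$-adic completion then yields $\widehat{R_\fm}\cong\widehat{C_\fm}^{s}$; but $\widehat{R_\fm}$ is indecomposable as a module over itself (local ring), forcing $s=1$ and $\widehat{C_\fm}\cong\widehat{R_\fm}$, whence $C_\fm\cong R_\fm$ by faithfully flat descent, giving (vii). Finally, once (i) is established, $C\otimes_R-$ and $\Hom_R(C,-)$ are mutually inverse exact equivalences of the whole category of $R$-modules, so $\gamma^C_M$ and $\xi^C_M$ are isomorphisms for every $M$ and all higher $\Tor$ and $\Ext$ with $C$ vanish, placing every module simultaneously in $\mathscr{A}_C(R)$ and $\mathscr{B}_C(R)$.
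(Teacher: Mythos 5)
Your argument is correct, but it takes a genuinely different route from the paper in several places, most notably for (vi)$\Rightarrow$(i). The paper's proof is much shorter there: writing $E\cong\Hom_R(C,I)$, it first checks that $I$ itself is faithfully injective (adjointness plus faithfulness of $E$), then observes that $\Hom_R(-\otimes_RC,I)\cong\Hom_R(-,E)$ is exact, so $-\otimes_RC$ is exact, i.e.\ $C$ is flat, hence projective since it is finitely generated; no localization, Matlis duality, or closure of $\mathscr{I}_C(R)$ under summands is needed. Your local argument works, but it leans on two extra nontrivial inputs (closure of $\mathscr{I}_C(R)$ under direct summands, and the decomposition theory of injectives plus Matlis duality), and two steps are stated more tersely than they deserve: the finiteness of the number of copies of $E_{R_\fm}(k)$ in $J$ needs the observation that $J\cong C_\fm\otimes_{R_\fm}E_{R_\fm}(k)$ is a quotient of $E_{R_\fm}(k)^{\mu(C_\fm)}$ and hence Artinian, and ``faithfully flat descent'' is not literally what gives $C_\fm\cong R_\fm$ from $\widehat{C_\fm}\cong\widehat{R_\fm}$ (isomorphism does not descend along faithfully flat maps in general); the quick fix is to note $\widehat{C_\fm}$ cyclic forces $\mu_{R_\fm}(C_\fm)=1$ and then $\Ann C_\fm=0$ gives $C_\fm\cong R_\fm$. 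Elsewhere you reprove cited facts rather than quote them: for (ii)$\Rightarrow$(i) the paper cites the result that a semidualizing module of finite projective dimension is projective, which you rederive via $\depth_{R_\fm}C_\fm=\depth R_\fm$ and Auslander--Buchsbaum; for (iv)$\Rightarrow$(i) both you and the paper use that finite $\mathscr{P}_C$-projective dimension forces $R\in\mathscr{B}_C(R)$, but you replace the citation ``$R\in\mathscr{B}_C(R)$ implies $C$ projective'' by a direct minimal-number-of-generators count from $\xi^C_R$, which is exactly in the spirit of the paper's later Theorem \ref{3.4}. For (i)$\Rightarrow$(vi) you show every faithfully injective module is $C$-injective, while the paper shows $\Hom_R(C,I)$ is faithfully injective; both are fine. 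What your approach buys is self-containedness and an explicit reduction of (vi) to the local condition (vii); what the paper's buys is brevity and avoidance of any structure theory of injective modules.
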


\begin{proof} The last assertion is trivial. So, we only show the claimed equivalences.

(i)$\Longleftrightarrow$(ii) is obvious. Note that by \cite[Corollary 2.2.8]{6}, every semidualizing module for $R$
with finite projective dimension is projective.

(iii)$\Longleftrightarrow$(v) and (i)$\Longleftrightarrow$(vii) are immediate, respectively by the equivalences
(i) $\Leftrightarrow$ (ii) and (ii) $\Leftrightarrow$ (iv) in \cite[Proposition 4.1.4]{6}.

(i) $\Rightarrow$ (iii) Since $C$ is a finitely generated projective $R$-module, it turns out that $\Hom_R(C,R)$ is
also a finitely generated projective $R$-module. Tensor evaluation implies that $$R\cong \Hom_R(C,C)\cong \Hom_R(C,
R\otimes_RC)\cong C\otimes_R\Hom_R(C,R).$$

(v) $\Rightarrow$ (i) Assume that $R\cong C\otimes_RP$ for some finitely generated projective $R$-module $P$. Then
$$\begin{array}{ll}
C& \cong \Hom_R(R,C)\\
& \cong \Hom_R(C\otimes_RP,C)\\
&\cong \Hom_R(P,\Hom_R(C,C))\\
&\cong \Hom_R(P,R),\\
\end{array}$$
and so $C$ is projective.

(iii) $\Rightarrow$ (iv) is trivial.

(iv) $\Rightarrow$ (i) \cite[Corollary 2.9]{8} yields that $R\in \mathscr{B}_C(R)$, and so $C$ is projective by
\cite[Corollary 4.1.6]{6}.

(i) $\Rightarrow$ (vi) Let $I$ be a faithfully injective $R$-module. Then $\Hom_R(C,I)$ is a $C$-injective $R$-module.
Since $C$ is projective, by adjointness, it follows that the $R$-module $\Hom_R(C,I)$ is injective. Suppose that
$\Hom_R(M,\Hom_R(C,I))=0$ for some $R$-module $M$. Then, adjointness yields that $\Hom_R(C,\Hom_R(M,I))=0$. This
implies that $\Hom_R(M,I)=0$, because $\Supp_RC=\Spec R$. So, $M=0$. Thus the $C$-injective $R$-module $\Hom_R(C,I)$
is faithfully injective.

(vi) $\Rightarrow$ (i) There is an injective $R$-module $I$ such that $E\cong \Hom_R(C,I)$. We show that $I$ is
faithfully injective. Suppose that $\Hom_R(M,I)=0$ for some $R$-module $M$. Then,
$$\begin{array}{ll}
\Hom_R(M,E)& \cong \Hom_R(M,\Hom_R(C,I))\\
&\cong\Hom_R(C,\Hom_R(M,I))\\
&= 0.\\
\end{array}$$
Since $E$ is faithfully injective, it follows that $M=0$. Thus $I$ is faithfully injective. As
$$\Hom_R(-\otimes_RC,I)\cong \Hom_R(-,\Hom_R(C,I)),$$ it follows that $\Hom_R(-\otimes_RC, I)$ is an exact functor.
Since $I$ is faithfully injective, the functor $-\otimes_R C$ is exact too. So, $C$ is flat. As $C$ is finitely
generated, it follows that $C$ is projective.

The last assertion of the lemma follows by \cite[Corollary 4.1.6]{6}.
\end{proof}

In the proofs of Theorems \ref{3.3} and \ref{3.4}, we apply the following useful result.

\begin{lemma}\label{3.2} Let $B$ and $C$ be two semidualizing modules for $R$.  The following are equivalent:
\begin{itemize}
\item[(i)] $[B]\approx [C]$.
\item[(ii)] $\mathscr{P}_B(R)=\mathscr{P}_C(R)$.
\item[(iii)] $\mathscr{F}_B(R)=\mathscr{F}_C(R)$.
\item[(iv)] $\mathscr{I}_B(R)=\mathscr{I}_C(R)$.
\item[(v)] For any given $R$-module $M$, a complex $X$ is an augmented proper $\mathscr{P}_B$-projective resolution of
$M$ if and only if it is an augmented proper $\mathscr{P}_C$-projective resolution of $M$.
\item[(vi)] For any given $R$-module $M$, a complex $X$ is an augmented proper $\mathscr{I}_B$-injective resolution of
$M$ if and only if it is an augmented proper $\mathscr{I}_C$-injective resolution of $M$.
\item[(vii)]  $\mathscr{P}_B\text{-}\pd_R(M)=\mathscr{P}_C\text{-}\pd_R(M)$ for every $R$-module $M$.
\item[(viii)]  $\mathscr{I}_B\text{-}\id_R(M)=\mathscr{I}_C\text{-}\id_R(M)$ for every $R$-module $M$.
\end{itemize}
\end{lemma}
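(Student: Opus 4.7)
The plan is to establish the equivalences by first showing that (i) implies each of (ii)--(viii), and then closing the cycle by showing each of the remaining conditions implies (i).

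\emph{First half.} Assume (i), so $C\cong B\otimes_R P$ for some $P\in\operatorname{Pic}(R)$ with inverse $P^{-1}:=\Hom_R(P,R)\in\operatorname{Pic}(R)$. For (ii) I would use $C\otimes_R Q\cong B\otimes_R(P\otimes_R Q)$ together with the fact that tensoring with the invertible module $P$ preserves projectivity in both directions; the same argument with ``flat'' in place of ``projective'' gives (iii). For (iv), adjunction yields $\Hom_R(C,I)\cong\Hom_R(B,\Hom_R(P,I))$, and $\Hom_R(P,I)$ is injective because $P$ is a direct summand of a finitely generated free module. Statements (v) and (vi) then follow: the terms transfer by (ii) and (iv), while the exactness of $\Hom_R(C,X^+)$ (respectively $C\otimes_R Y^+$) is preserved and reflected from that of $\Hom_R(B,X^+)$ (respectively $B\otimes_R Y^+$) because tensoring with the invertible module $P$ or $P^{-1}$ is a faithfully exact functor. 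Statements (vii) and (viii) are then consequences of (v) and (vi) via the definitions of the relative dimensions.

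\emph{Second half.} The implication (v) $\Rightarrow$ (ii) is immediate: if $M\in\mathscr{P}_C(R)$, then the complex $0\to M\xrightarrow{\mathrm{id}}M\to 0$ is an augmented proper $\mathscr{P}_C$-projective resolution of $M$ (its $\Hom_R(C,-)$-image is the identity complex on $\Hom_R(C,M)$), and (v) makes it a $\mathscr{P}_B$-projective resolution, forcing $M\in\mathscr{P}_B(R)$; the symmetric argument gives equality. The implication (vi) $\Rightarrow$ (iv) is analogous. Lemma~\ref{2.4}(i) identifies the class of nonzero $C$-projective modules with the class of modules of $\mathscr{P}_C$-projective dimension zero, yielding (vii) $\Rightarrow$ (ii); Lemma~\ref{2.4}(ii) gives (viii) $\Rightarrow$ (iv) in the same manner.

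For (ii) $\Rightarrow$ (i): since $C=C\otimes_R R\in\mathscr{P}_C(R)=\mathscr{P}_B(R)$, there is a projective (in particular flat) $R$-module $F$ with $C\cong B\otimes_R F$, and the remark at the end of Section~2 delivers $[B]\approx[C]$. The implication (iii) $\Rightarrow$ (i) is identical with ``flat'' replacing ``projective''.

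The delicate step is (iv) $\Rightarrow$ (i): an element of $\mathscr{I}_C(R)$ has the shape $\Hom_R(C,-)$ rather than $C\otimes_R-$, so a flat $F$ with $C\cong B\otimes_R F$ cannot be read off directly. The plan is to fix a faithfully injective $E$; then (iv) gives $\Hom_R(C,E)\cong\Hom_R(B,J)$ for some injective $J$, and Foxby equivalence applied on the $\mathscr{A}_B$ side yields $J\cong B\otimes_R\Hom_R(C,E)$. The obstacle is to upgrade this identity between injective modules to a comparison of the semidualizing modules $B$ and $C$ themselves. I would exploit the fact that $\Hom_R(B,-)$ and $\Hom_R(C,-)$ both induce equivalences $\mathscr{I}(R)\to\mathscr{I}_B(R)=\mathscr{I}_C(R)$, so their composition is an auto-equivalence of $\mathscr{I}(R)$ whose analysis, via a character-module argument, should yield a flat $F$ (morally $\Hom_R(B,C)$) with $C\cong B\otimes_R F$; the remark from Section~2 then concludes $[B]\approx[C]$.
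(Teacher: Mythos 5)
Most of your plan is sound, and in places it streamlines the paper's route: deducing (v) and (vi) directly from (i) via faithful exactness of tensoring with the invertible $P$, and the trivial-resolution trick giving (v) $\Rightarrow$ (ii) and (vi) $\Rightarrow$ (iv) (the paper instead passes through (vii) and (viii) and Lemma \ref{2.4}(i),(ii)). Your (ii) $\Rightarrow$ (i), (iii) $\Rightarrow$ (i), (vii) $\Rightarrow$ (ii), (viii) $\Rightarrow$ (iv) coincide with the paper's arguments. The genuine gap is the implication (iv) $\Rightarrow$ (i), which you yourself flag as unresolved. After fixing a faithfully injective $E$ and obtaining $\Hom_R(C,E)\cong\Hom_R(B,J)$ with $J$ injective, your proposal is to analyze ``the composition of the equivalences $\Hom_R(B,-)$ and $\Hom_R(C,-)$ as an auto-equivalence of $\mathscr{I}(R)$'' and extract a flat $F$ with $C\cong B\otimes_RF$ by a character-module argument. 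This does not work as stated: condition (iv) is only an equality of two classes of objects, so it supplies no functorial correspondence $I\mapsto J$ and hence no auto-equivalence of $\mathscr{I}(R)$ to analyze; and even objectwise, producing a flat $F$ (morally $\Hom_R(B,C)$) with $C\cong B\otimes_RF$ is precisely the point at issue --- knowing, say, that $\Hom_R(B,C)$ is flat and that the evaluation map $B\otimes_R\Hom_R(B,C)\to C$ is an isomorphism amounts to $C\in\mathscr{B}_B(R)$ plus more, none of which follows from the isomorphism $\Hom_R(C,E)\cong\Hom_R(B,J)$ without further input.

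The missing idea is to route the comparison through the Auslander and Bass classes rather than to hunt for $F$ directly. From $\Hom_R(C,E)\cong\Hom_R(B,J)$ with $J$ injective, Foxby equivalence gives $\Hom_R(C,E)\in\mathscr{A}_B(R)$; then \cite[Proposition 3.3.1]{6} (membership in $\mathscr{A}_B(R)$ of the dual with respect to a faithfully injective module detects membership in $\mathscr{B}_B(R)$) yields $C\in\mathscr{B}_B(R)$. By symmetry, the inclusion $\mathscr{I}_B(R)\subseteq\mathscr{I}_C(R)$ gives $B\in\mathscr{B}_C(R)$, and the equivalence (i) $\Leftrightarrow$ (vii) of \cite[Proposition 4.1.4]{6} --- mutual Bass-class membership characterizes $[B]\approx[C]$ --- concludes. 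Note that this argument never produces a flat $F$ with $C\cong B\otimes_RF$ en route; that only becomes available after $[B]\approx[C]$ is known. Without these two citations (or proofs of their content), your (iv) $\Rightarrow$ (i) remains a sketch, and since (vi) and (viii) are funneled through (iv) in your scheme, the cycle does not close.
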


\begin{prf} (i) $\Rightarrow$ (ii) and (i) $\Rightarrow$ (iii) There are projective $R$-modules $P$ and $Q$ such that
$B\cong C\otimes_RP$ and $C\cong B\otimes_RQ$. Since the tensor product of every two projective (flat) $R$-modules is
projective (flat), it follows that $\mathscr{P}_B(R)=\mathscr{P}_C(R)$ and $\mathscr{F}_B(R)=\mathscr{F}_C(R)$.

(ii) $\Rightarrow$ (i) Since $C\in \mathscr{P}_C(R)$, the equality $\mathscr{P}_B(R)=\mathscr{P}_C(R)$, yields the
existence of a projective $R$-module $P$ such that $C\cong B\otimes_RP$. Now, the claim follows by the equivalence (i)
$\Leftrightarrow$ (ii) in \cite[Proposition 4.1.4]{6}.

(iii) $\Rightarrow$ (i) follows exactly by the proof of (ii) $\Rightarrow$ (i).

(i) $\Rightarrow$ (iv) There are projective $R$-modules $P$ and $Q$ such that $B\cong C\otimes_RP$ and $C\cong
B\otimes_RQ$. Let $I$ be an injective $R$-module. Adjointness implies that $$\Hom_R(B,I)\cong \Hom_R(C,\Hom_R(P,I))$$
and $$\Hom_R(C,I)\cong \Hom_R(B,\Hom_R(Q,I)).$$  As the $R$-modules $\Hom_R(P,I)$ and $\Hom_R(Q,I)$ are injective, it
follows that $\mathscr{I}_B(R)=\mathscr{I}_C(R)$.

(iv) $\Rightarrow$ (i) Let $E$ be a faithfully injective $R$-module. Then $\Hom_R(C,E)$ is a $C$-injective $R$-module.
As $\mathscr{I}_C(R)\subseteq \mathscr{I}_B(R)$, there is an injective $R$-module $I$ such that $\Hom_R(C,E)\cong \Hom_R(B,I)$.
Thus, by Foxby equivalence, $\Hom_R(C,E)\in \mathscr{A}_B(R)$. Now, \cite[Proposition 3.3.1]{6} implies that $C\in
\mathscr{B}_B(R)$. In the same way, the inclusion $\mathscr{I}_B(R) \subseteq \mathscr{I}_C(R)$ implies that $B\in
\mathscr{B}_C(R)$. Now, the claim follows by the equivalence (i) $\Leftrightarrow$ (vii) in \cite[Proposition 4.1.4]{6}.

(ii) $\Rightarrow$ (v) Let $M$ be an $R$-module. By the symmetry, it is enough to show that every augmented proper
$\mathscr{P}_B$-projective resolution of $M$ is an augmented proper $\mathscr{P}_C$-projective resolution of $M$.
Let $$X^+=\cdots \rightarrow B\otimes_RP_2 \rightarrow B\otimes_RP_1\rightarrow B\otimes_RP_0 \rightarrow M\rightarrow 0$$ be
an augmented proper $\mathscr{P}_B$-projective resolution of $M$. As every $B$-projective $R$-module is $C$-projective, it is
enough to show that $\Hom_R(C,X^+)$ is exact. Since $C\in \mathscr{P}_C(R)=\mathscr{P}_B(R)$, it follows that $C\cong B\otimes_RP$
for some projective $R$-module $P$. Now, $$\Hom_R(C,X^+)\cong\Hom_R(B\otimes_RP,X^+)\cong \Hom_R(P,\Hom_R(B,X^+)),$$
and so $\Hom_R(C,X^+)$ is exact.

(iv) $\Rightarrow$ (vi) Let $M$ be an $R$-module. Let $$Y^+= 0\rightarrow M \rightarrow \Hom_R(B,I^0)\rightarrow \Hom_R(B,I^1)
\rightarrow \cdots $$ be an augmented proper $\mathscr{I}_B$-injective resolution of $M$. As every $B$-injective $R$-module is
$C$-injective, it is enough to show that $C\otimes _R Y^+$ is exact. We have shown above that (iv) and (i) are equivalent. So,
$C\cong B\otimes_RP$ for some projective $R$-module $P$. Now, $$C\otimes_R Y^+ \cong (B\otimes_RP)\otimes_RY^+\cong P\otimes_R
(B\otimes_RY^+),$$ and so $C\otimes_RY^+$ is exact. The reverse follows by symmetry.

(v) $\Rightarrow$ (vii) and (vi) $\Rightarrow$ (viii) are trivial by the definition.

(vii) $\Rightarrow$ (ii) and (viii) $\Rightarrow$ (iv) follow immediately by parts (i) and (ii) of Lemma \ref{2.4}.
\end{prf}

The next result is the first main result of this paper.

\begin{theorem}\label{3.3} Let $B$ and $C$ be two semidualizing modules for $R$. The following are equivalent:
\begin{itemize}
\item[(i)] $[B]\approx [C]$.
\item[(ii)] $\Ext^i_{\mathscr{P}_C}(M,N)\cong \Ext^i_{\mathscr{P}_B}(M,N)$ for all $R$-modules $M$ and $N$ and $i\geq 0$.
\item[(iii)] $\Ext^i_{\mathscr{I}_C}(M,N)\cong \Ext^i_{\mathscr{I}_B}(M,N)$ for all $R$-modules $M$ and $N$ and $i\geq 0$.
\end{itemize}
\end{theorem}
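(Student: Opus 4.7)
The plan is to reduce Theorem \ref{3.3} to Lemmas \ref{2.4} and \ref{3.2}, which together package everything needed. The implications (i) $\Rightarrow$ (ii) and (i) $\Rightarrow$ (iii) should follow because the underlying relative resolutions themselves coincide, while the reverse implications should follow from the characterizations of relative dimensions in terms of vanishing of relative Ext.

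For (i) $\Rightarrow$ (ii), I invoke condition (v) of Lemma \ref{3.2}: for every $R$-module $M$, an augmented proper $\mathscr{P}_B$-projective resolution of $M$ is also an augmented proper $\mathscr{P}_C$-projective resolution (and conversely). Truncating yields a single complex $L$ that is simultaneously a proper $\mathscr{P}_B$-projective and a proper $\mathscr{P}_C$-projective resolution of $M$. Then for every $R$-module $N$, both $\Ext^i_{\mathscr{P}_B}(M,N)$ and $\Ext^i_{\mathscr{P}_C}(M,N)$ are computed as $\text{H}_{-i}(\Hom_R(L,N))$, hence are equal for all $i\geq 0$. The implication (i) $\Rightarrow$ (iii) is identical in structure, using Lemma \ref{3.2}(vi) in place of (v).

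For (ii) $\Rightarrow$ (i), Lemma \ref{2.4}(iii) gives
\[
\mathscr{P}_C\text{-}\pd_R(M)=\sup\{n\in\mathbb{N}_0\mid \Ext^n_{\mathscr{P}_C}(M,-)\neq 0\},
\]
together with the analogous formula for $B$. The hypothesized isomorphism $\Ext^i_{\mathscr{P}_C}(M,N)\cong\Ext^i_{\mathscr{P}_B}(M,N)$ transports vanishing in the second slot, so the two suprema agree for every $M$. This is condition (vii) of Lemma \ref{3.2}, which is equivalent to (i). The proof of (iii) $\Rightarrow$ (i) is parallel, using Lemma \ref{2.4}(iv) to arrive at condition (viii) of Lemma \ref{3.2}. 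The argument is essentially bookkeeping once Lemmas \ref{2.4} and \ref{3.2} are in hand; no genuinely new idea is needed, the only small point of care being the observation that a natural bifunctor isomorphism in the second slot preserves the property ``there exists an $N$ making the module nonzero'', which is what equates the defining suprema.
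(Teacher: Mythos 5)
Your proposal is correct and follows essentially the same route as the paper: the forward implications via Lemma \ref{3.2}(v)--(vi) (a common proper resolution computes both relative Ext functors), and the converses by translating Lemma \ref{2.4}(iii)--(iv) into conditions (vii) and (viii) of Lemma \ref{3.2}. Your write-up merely makes explicit the bookkeeping that the paper leaves as ``clear by Lemma \ref{3.2}.''
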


\begin{proof} (i) $\Rightarrow$ (ii) and (i) $\Rightarrow$ (iii) are clear by Lemma \ref{3.2}.

(ii) $\Rightarrow$ (i) In view of Lemma \ref{2.4}(iii), we deduce that $\mathscr{P}_B\text{-}\pd_R(M)=\mathscr{P}_C\text{-}\pd_R(M)$
for every $R$-module $M$.  Hence, (i) holds by Lemma \ref{3.2}.

(iii) $\Rightarrow$ (i) In view of Lemma \ref{2.4}(iv), we deduce that $\mathscr{I}_B\text{-}\id_R(M)=\mathscr{I}_C\text{-}\id_R(M)$
for every $R$-module $M$.  Hence, (i) holds by Lemma \ref{3.2}.
\end{proof}

In what follows, for a finitely generated module $M$ over a local ring $(R,\fm,k)$, let $\mu_R(M)$ denote the minimum number
of generators of $M$. Also, let $\text{Vdim}_kL$ denote the dimension of a $k$-vector space $L$.  It is known that $\mu_R(M)=\text{Vdim}_k(k\otimes_RM)$. In particular, if $N$ is another finitely generated module over $(R,\fm,k)$, then
$$\begin{array}{ll}
\mu_R(M\otimes_RN)&=\text{Vdim}_k(k\otimes_R(M\otimes_RN))\\
&=\text{Vdim}_k((k\otimes_RM)\otimes_k(k\otimes_RN))\\
&=\text{Vdim}_k(k\otimes_RM)\times \text{Vdim}_k(k\otimes_RN)\\
&=\mu_R(M)\mu_R(N).
\end{array}
$$
For every finitely generated $R$-module $M$ and maximal ideal $\fm$ of $R$, one may check that $\mu_{R_{\fm}}(M_{\fm})=
\text{Vdim}_{R/\fm}(R/\fm\otimes_RM)$.

\begin{theorem}\label{3.4} Let $B$ and $C$ be two semidualizing modules for $R$. The following are equivalent:
\begin{itemize}
\item[(i)] $[B]\approx [R]\approx [C]$.
\item[(ii)] $\Ext^i_{\mathscr{P}_C}(M,N)\cong \Ext^i_{\mathscr{I}_B}(M,N)$ for all $R$-modules $M$ and $N$ and $i\geq 0$.
\item[(iii)] $\Ext^i_{\mathscr{P}_C}(C,R/\fm)\cong \Ext^i_{\mathscr{I}_B}(C,R/\fm)$ for all maximal ideals $\fm$ of
$R$ and $i\geq 0$.
\item[(iv)] $\Ext^0_{\mathscr{P}_C}(C,R/\fm)\cong \Ext^0_{\mathscr{I}_B}(C,R/\fm)$ and $\Ext^0_{\mathscr{P}_C}(B,R/\fm)
\cong \Ext^0_{\mathscr{I}_B}(B,R/\fm)$ for all maximal ideals $\fm$ of $R$.
\end{itemize}
\end{theorem}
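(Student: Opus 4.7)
The plan is to prove the cycle (i) $\Rightarrow$ (ii) $\Rightarrow$ (iii) $\Rightarrow$ (iv) $\Rightarrow$ (i). The implications (ii) $\Rightarrow$ (iii) $\Rightarrow$ (iv) are trivial restrictions. For (i) $\Rightarrow$ (ii), Lemma \ref{3.1} guarantees that both $B$ and $C$ are projective, and then Theorem \ref{3.3} applied twice (with $R$ playing the role of the second semidualizing module) gives
\[
\Ext^i_{\mathscr{P}_C}(M,N) \cong \Ext^i_{\mathscr{P}_R}(M,N) \cong \Ext^i_R(M,N) \cong \Ext^i_{\mathscr{I}_R}(M,N) \cong \Ext^i_{\mathscr{I}_B}(M,N),
\]
where the middle two isomorphisms come from Lemma \ref{2.4}(v)(vi) specialized to $C=R$.

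The substance of the theorem is (iv) $\Rightarrow$ (i), which I would prove locally at each maximal ideal by converting the two $\Ext^0$-isomorphisms into equalities of $k$-vector space dimensions. Fix a maximal ideal $\fm$, set $k:=R/\fm$, $b:=\mu_{R_\fm}(B_\fm)$ and $c:=\mu_{R_\fm}(C_\fm)$; note $b,c\ge 1$, since $\Hom_R(B,B)\cong R$ and $\Hom_R(C,C)\cong R$ force both modules to have full support. Using Lemma \ref{2.4}(v)(vi) at $i=0$, the first isomorphism in (iv) becomes an identification of
\[
\Ext^0_{\mathscr{P}_C}(C,R/\fm)\cong\Hom_R(\Hom_R(C,C),\Hom_R(C,R/\fm))\cong\Hom_R(C,R/\fm),
\]
which has $k$-dimension $c$, with $\Ext^0_{\mathscr{I}_B}(C,R/\fm)\cong\Hom_R(B\otimes_R C,\, B/\fm B)$. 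Applying the standard adjunction $\Hom_R(M,V)\cong\Hom_k(M\otimes_R k,V)$ for an $\fm$-annihilated module $V$, together with the multiplicativity formula $\mu(M\otimes_R N)=\mu(M)\mu(N)$ recorded immediately before the theorem, this last Hom has $k$-dimension $b^2c$. Hence $b^2c=c$, forcing $b=1$. Consequently $B_\fm$ is cyclic, and because it is also semidualizing the homothety $R_\fm\to B_\fm$ is an isomorphism, so $B_\fm\cong R_\fm$; Lemma \ref{3.1}(vii) then gives that $B$ is projective.

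Armed with $b=1$, I would feed $M=B$ into the second isomorphism of (iv). The right-hand side $\Hom_R(B\otimes_R B,\, B/\fm B)$ collapses to a $k$-vector space of dimension $b^3=1$, while the left-hand side $\Hom_R(\Hom_R(C,B),\Hom_R(C,R/\fm))$ has $k$-dimension $hc$, where $h:=\mu_{R_\fm}(\Hom_R(C,B)_\fm)$. The equality $hc=1$ in $\mathbb{N}$ immediately yields $c=1$, which by the same cyclic-plus-semidualizing argument gives $C_\fm\cong R_\fm$. Since this holds at every maximal ideal, Lemma \ref{3.1}(vii) concludes that $C$ is projective, completing (i).

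The main obstacle is not any single computation but rather the choice of test modules in (iv) $\Rightarrow$ (i): taking $M=C$ first exploits $\Hom_R(C,C)\cong R$ to cleanly isolate $b$, and subsequently taking $M=B$ exploits the newly established $b=1$ to make the $\mathscr{I}_B$-side collapse to $1$, thereby forcing the product $hc$ to equal $1$ without our ever needing to identify $h$ in terms of $c$.
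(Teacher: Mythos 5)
Your treatment of (i) $\Rightarrow$ (ii) and your proof of (iv) $\Rightarrow$ (i) are correct, and the latter is essentially the paper's own argument (same test modules $C$ then $B$, same dimension counts $c=b^2c$ and $hc=1$, same appeal to the fact that a cyclic semidualizing module over a local ring is free, and to Lemma \ref{3.1}(vii)). However, your overall plan has a genuine gap: the cycle (i) $\Rightarrow$ (ii) $\Rightarrow$ (iii) $\Rightarrow$ (iv) $\Rightarrow$ (i) relies on the claim that (iii) $\Rightarrow$ (iv) is a ``trivial restriction,'' and it is not. Condition (iii) only concerns the modules $\Ext^i_{\mathscr{P}_C}(C,R/\fm)$ and $\Ext^i_{\mathscr{I}_B}(C,R/\fm)$, i.e.\ the first argument is always $C$; condition (iv) additionally demands $\Ext^0_{\mathscr{P}_C}(B,R/\fm)\cong \Ext^0_{\mathscr{I}_B}(B,R/\fm)$, with $B$ in the first slot, and this is nowhere contained in (iii). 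The two conditions are incomparable specializations of (ii): (iii) fixes $M=C$ but keeps all $i\geq 0$, while (iv) fixes $i=0$ but allows $M\in\{B,C\}$. So as written your argument derives nothing from (iii), and the equivalence of (iii) with the other statements is not established. (Nor is the missing implication easily patched at the level of $i=0$ alone: from the $i=0$ part of (iii) you only get $b=1$, and producing the second isomorphism in (iv) is essentially equivalent to the conclusion $c=1$ you are trying to reach.)

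The paper closes this hole by proving (iii) $\Rightarrow$ (i) directly, and the higher cohomological degrees in (iii) are exactly what is needed. First, the $i=0$ case of (iii) gives $B_\fm\cong R_\fm$ as in your argument. Next, since $C$ is $C$-projective, Lemma \ref{2.4}(i),(iii) gives $\Ext^i_{\mathscr{P}_C}(C,R/\fm)=0$ for all $i\geq 1$, so by (iii) also $\Ext^i_{\mathscr{I}_B}(C,R/\fm)=0$ for $i\geq 1$. By Lemma \ref{2.4}(vi) this module is $\Ext^i_R(B\otimes_RC,B\otimes_RR/\fm)$; localizing at $\fm$ and using $B_\fm\cong R_\fm$ yields $\Ext^i_{R_\fm}(C_\fm,R_\fm/\fm R_\fm)=0$ for all $i\geq 1$, hence $C_\fm$ is a free $R_\fm$-module, and being semidualizing, $C_\fm\cong R_\fm$. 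Adding an argument of this kind (or any other derivation of (i) from (iii)) is necessary to complete your proof.
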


\begin{proof} (i) $\Rightarrow$ (ii) follows by Theorem \ref{3.3}.
	
(ii) $\Rightarrow$ (iii) and (ii) $\Rightarrow$ (iv) are obvious.

(iv) $\Rightarrow$ (i) By Lemma \ref{3.1}, it is enough to show that $B_{\fm}\cong R_{\fm}\cong C_{\fm}$ for every maximal ideal
$\fm$ of $R$. Let $\fm$ be a maximal ideal of $R$, and set $k:=R/\fm$. By parts (v) and (vi) of Lemma \ref{2.4}, we have:
$$\begin{array}{ll}
\Ext^0_{\mathscr{P}_C}(C,k)&\cong \Ext^0_R(\Hom_R(C,C),\Hom_R(C,k))\\
&\cong \Hom_R(C,k)\\
&\cong \Hom_R(C,\Hom_k(k,k))\\
&\cong \Hom_k(C\otimes_Rk,k)\\
&\cong k^{\mu_{R_{\fm}}(C_{\fm})},
\end{array}
$$
and
$$\begin{array}{ll}
\Ext^0_{\mathscr{I}_B}(C,k)& \cong \Ext^0_R(B\otimes_RC,B\otimes_Rk)\\
&\cong \Hom_R(B\otimes_RC,\Hom_k(k,B\otimes_Rk))\\
&\cong \Hom_k((B\otimes_RC)\otimes_Rk,B\otimes_Rk)\\
&\cong k^{\mu_{R_{\fm}}(C_{\fm})\mu_{R_{\fm}}(B_{\fm})^2}.
\end{array}
$$
Hence, $\mu_{R_{\fm}}(C_{\fm})=\mu_{R_{\fm}}(C_{\fm})\mu_{R_{\fm}}(B_{\fm})^2$, and so $\mu_{R_{\fm}}(B_{\fm})=1$.
Thus $B_{\fm}$ is a cyclic semidualizing module for the ring $R_{\fm}$. Now,  \cite[Corollary 2.1.14]{6} implies
that $B_{\fm}\cong R_{\fm}$.

Set $t:=\mu_{R_{\fm}}(\Hom_R(C,B)_{\fm})$. Then, we have:
$$\begin{array}{ll}
\Ext^0_{\mathscr{P}_C}(B,k)&\cong \Ext^0_R(\Hom_R(C,B),\Hom_R(C,k))\\
&\cong \Hom_R(\Hom_R(C,B)\otimes_RC,k)\\
&\cong \Hom_R(\Hom_R(C,B)\otimes_RC,\Hom_k(k,k))\\
&\cong \Hom_k((\Hom_R(C,B)\otimes_RC)\otimes_Rk,k)\\
&\cong k^{t\mu_{R_{\fm}}(C_{\fm})},
\end{array}
$$
and
$$\begin{array}{ll}
\Ext^0_{\mathscr{I}_B}(B,k)& \cong \Ext^0_R(B\otimes_RB,B\otimes_Rk)\\
&\cong \Hom_R(B\otimes_RB,\Hom_k(k,B\otimes_Rk))\\
&\cong \Hom_k((B\otimes_RB)\otimes_Rk,B\otimes_Rk)\\
&\cong k^{\mu_{R_{\fm}}(B_{\fm})^3}\\
&\cong k.
\end{array}
$$
Hence $t\mu_{R_{\fm}}(C_{\fm})=1$, and so $\mu_{R_{\fm}}(C_{\fm})=1$. So, again applying \cite[Corollary 2.1.14]{6} yields that
$C_{\fm}\cong R_{\fm}$.

(iii) $\Rightarrow$ (i) Again by Lemma \ref{3.1}, it is enough to show that $B_{\fm}\cong R_{\fm}\cong C_{\fm}$ for every
maximal ideal $\fm$ of $R$. Let $\fm$ be a maximal ideal of $R$. By the proof of (iv) $\Rightarrow$ (i), we conclude that
$B_{\fm}\cong R_{\fm}$. Clearly, $C$ is $C$-projective, and so by parts (i) and (iii) of Lemma \ref{2.4}, we conclude that $\Ext^i_{\mathscr{P}_C}(C,R/\fm)=0$
for all $i\geq 1$. For every $i\geq 1$, Lemma \ref{2.4}(vi) implies that:
$$\begin{array}{ll}
\Ext^i_{R_{\fm}}(C_{\fm},R_{\fm}/\fm R_{\fm})&\cong \Ext^i_{R_{\fm}}(R_{\fm}\otimes_{R_{\fm}}C_{\fm},R_{\fm}
\otimes_{R_{\fm}}(R_{\fm}/\fm R_{\fm}))\\
& \cong \Ext^i_{R_{\fm}}(B_{\fm}\otimes_{R_{\fm}}C_{\fm},B_{\fm}
\otimes_{R_{\fm}}(R_{\fm}/\fm R_{\fm}))\\
& \cong \Ext^i_R(B\otimes_RC,B\otimes_RR/\fm)_{\fm}\\
& \cong \Ext^i_{\mathscr{I}_B}(C,R/\fm)_{\fm}\\
&\cong \Ext^i_{\mathscr{P}_C} (C,R/\fm)_{\fm}\\
&=0.
\end{array}
$$
Therefore, $C_{\fm}$ is a projective semidualizing module for the local ring $R_{\fm}$, and so $C_{\fm}\cong R_{\fm}$.
\end{proof}

Let $B$ and $C$ be two semidualizing modules for a local ring $(R,\fm,k)$. In view of the fourth part of the above
result, it might be natural to ask: Does $\Ext^0_{\mathscr{P}_C}(C,k)\cong \Ext^0_{\mathscr{I}_B}(C,k)$ imply that
$B\cong R \cong C$?  The answer is negative, in view of the following example.

\begin{example}\label{3.5} Let $(R,\fm,k)$ be a local ring and $C$ a semidualizing module for $R$ such that $C\ncong R$.
Lemma \ref{2.4}(v) implies that $\Ext^0_{\mathscr{P}_C}(C,k)\cong \Ext^0_R(C,k)$, while $C\ncong R$.
\end{example}

We record the following corollary. It shows that for a semidualizing module $C$ for $R$, the two relative Ext modules
defined by $C$ coincide for all $i\geq 0$ if and only if $C$ is projective.

\begin{corollary}\label{3.6} Let $C$ be a  semidualizing module for $R$. The following conditions are equivalent:
\begin{itemize}
\item[(i)] $[C]\approx [R]$.
\item[(ii)] $\Ext^i_{\mathscr{P}_C}(M,N)\cong \Ext^i_{\mathscr{I}_C}(M,N)$ for all $R$-modules $M$ and $N$ and $i\geq 0$.
\item[(iii)] $\Ext^i_{\mathscr{P}_C}(C,R/\fm)\cong \Ext^i_{\mathscr{I}_C}(C,R/\fm)$ for all maximal ideals $\fm$ of $R$ and
$i\geq 0$.
\item[(iv)] $\Ext^0_{\mathscr{P}_C}(C,R/\fm)\cong \Ext^0_{\mathscr{I}_C}(C,R/\fm)$ for all maximal ideals $\fm$ of $R$.
\item[(v)] $\Ext^i_{\mathscr{P}_C}(M,N)\cong \Ext^i_R(M,N)$ for all $R$-modules $M$ and $N$ and $i\geq 0$.
\item[(vi)] $\Ext^i_{\mathscr{P}_C}(C,R/\fm)\cong \Ext^i_R(C,R/\fm)$ for all maximal ideals $\fm$ of $R$ and $i\geq 0$.
\item[(vii)] $\Ext^i_{\mathscr{I}_C}(M,N)\cong \Ext^i_R(M,N)$ for all $R$-modules $M$ and $N$ and $i \geq 0$.
\item[(viii)] $\Ext^i_{\mathscr{I}_C}(C,R/\fm)\cong \Ext^i_R(C,R/\fm)$ for all maximal ideals $\fm$ of $R$ and $i \geq 0$.
\end{itemize}
\end{corollary}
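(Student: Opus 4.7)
The plan is to reduce everything to Theorem~\ref{3.4} together with Lemma~\ref{2.4}. Setting $B=C$ in Theorem~\ref{3.4} collapses its four listed conditions to the present (i)--(iv) of the corollary (in particular, the two equalities in Theorem~\ref{3.4}(iv) coincide when $B=C$, so only one copy of them is needed), and so (i)$\Leftrightarrow$(ii)$\Leftrightarrow$(iii)$\Leftrightarrow$(iv) is immediate. What remains is to absorb (v)--(viii), which compare the two relative Ext bifunctors with the ordinary one. I would do this through the two closed cycles
\[
\mathrm{(i)}\Rightarrow\mathrm{(v)}\Rightarrow\mathrm{(vi)}\Rightarrow\mathrm{(i)}, \qquad \mathrm{(i)}\Rightarrow\mathrm{(vii)}\Rightarrow\mathrm{(viii)}\Rightarrow\mathrm{(i)}.
\]

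For the forward implications out of (i), Lemma~\ref{3.1} makes $C$ projective of rank $1$, so both $C\otimes_R-$ and $\Hom_R(C,-)\cong \Hom_R(C,R)\otimes_R-$ are auto-equivalences of the category of $R$-modules and the standard invertible-module identity $\Ext^i_R(L\otimes_R M,L\otimes_R N)\cong \Ext^i_R(M,N)$ holds with $L=C$ or $L=\Hom_R(C,R)$. Combined with parts (v) and (vi) of Lemma~\ref{2.4}, this yields
\[
\Ext^i_{\mathscr{P}_C}(M,N)\cong \Ext^i_R(\Hom_R(C,M),\Hom_R(C,N))\cong \Ext^i_R(M,N),
\]
and symmetrically for $\mathscr{I}_C$, giving (v) and (vii). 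The specializations (v)$\Rightarrow$(vi) and (vii)$\Rightarrow$(viii) are tautological.

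For (vi)$\Rightarrow$(i), I would exploit that $C$ is itself $C$-projective: parts (i) and (iii) of Lemma~\ref{2.4} force $\mathscr{P}_C\text{-}\pd_R(C)=0$ and hence $\Ext^i_{\mathscr{P}_C}(C,-)=0$ for all $i\geq 1$. Hypothesis (vi) then implies $\Ext^i_R(C,R/\fm)=0$ for every maximal ideal $\fm$ and every $i\geq 1$. Localizing yields $\pd_{R_\fm}(C_\fm)=0$, so $C_\fm$ is a finitely generated free $R_\fm$-module; since $C_\fm$ is still semidualizing, the homothety isomorphism forces it to be cyclic, and \cite[Corollary~2.1.14]{6} delivers $C_\fm\cong R_\fm$. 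Lemma~\ref{3.1}(vii) then gives (i).

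The hardest step is (viii)$\Rightarrow$(i), because $C$ need not be $C$-injective, so the clean vanishing trick above is unavailable. The plan is to imitate the $k$-vector space dimension count in the proof of Theorem~\ref{3.4}(iv)$\Rightarrow$(i). Using Lemma~\ref{2.4}(vi) at $i=0$,
\[
\Ext^0_{\mathscr{I}_C}(C,R/\fm)\cong \Hom_R(C\otimes_R C,\, C\otimes_R R/\fm)
\]
is a $k$-vector space of dimension $\mu_{R_\fm}(C_\fm)^3$, while $\Ext^0_R(C,R/\fm)=\Hom_R(C,R/\fm)$ has $k$-dimension $\mu_{R_\fm}(C_\fm)$; the $i=0$ instance of (viii) forces $\mu_{R_\fm}(C_\fm)^3=\mu_{R_\fm}(C_\fm)$, hence $\mu_{R_\fm}(C_\fm)=1$. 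Thus $C_\fm$ is a cyclic semidualizing $R_\fm$-module, so $C_\fm\cong R_\fm$ by \cite[Corollary~2.1.14]{6}, and Lemma~\ref{3.1} closes the cycle.
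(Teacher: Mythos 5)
Your proof is correct, and for (i)--(iv) it is exactly the paper's argument (Theorem \ref{3.4} with $B=C$); where you genuinely differ is in how you absorb (v)--(viii). The paper disposes of these purely by re-citing Theorem \ref{3.4} under the substitutions $B:=R$ (for (v), (vi)) and $B:=C$, $C:=R$ (for (vii), (viii)), implicitly using that the relative theories attached to the semidualizing module $R$ are the absolute ones, $\Ext^i_{\mathscr{P}_R}=\Ext^i_R=\Ext^i_{\mathscr{I}_R}$. You instead re-derive them: (i)$\Rightarrow$(v),(vii) via the invertible-module identity together with Lemma \ref{2.4}(v),(vi); (vi)$\Rightarrow$(i) via the vanishing of $\Ext^{i}_{\mathscr{P}_C}(C,-)$ for $i\geq 1$ and the characterization of projective dimension by $\Ext^i_{R_\fm}(-,k)$; (viii)$\Rightarrow$(i) by the $\mu^3=\mu$ count. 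These are precisely the techniques inside the proof of Theorem \ref{3.4}, so your route costs a little length but is self-contained, and in the case of (viii)$\Rightarrow$(i) it is actually tighter than the paper's one-line citation: under $B:=C$, $C:=R$, condition (iv) of Theorem \ref{3.4} also contains the hypothesis $\Ext^0_{\mathscr{P}_R}(R,R/\fm)\cong\Ext^0_{\mathscr{I}_C}(R,R/\fm)$, which is not literally supplied by (viii), and what really carries the implication is the dimension-count argument from the theorem's proof, which is what you reproduce. Two trivial touch-ups: in the cancellation $\mu_{R_\fm}(C_\fm)^3=\mu_{R_\fm}(C_\fm)$ you should note $\mu_{R_\fm}(C_\fm)\geq 1$ (i.e. $\Supp_RC=\Spec R$), and in (vi)$\Rightarrow$(i) the appeal to \cite[Corollary 2.1.14]{6} is redundant, since once $C_\fm$ is free the homothety isomorphism $R_\fm\cong\Hom_{R_\fm}(C_\fm,C_\fm)$ already forces rank one.
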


\begin{proof} (i) $\Leftrightarrow$ (ii),  (i) $\Leftrightarrow$ (v) and (i) $\Leftrightarrow$ (vii) are clear by Theorem
\ref{3.4}.

(i) $\Leftrightarrow$ (iii) is clear by Theorem \ref{3.4}. It is enough to put $B:=C$.

(iv) $\Rightarrow$ (i) is immediate from the implication (iv) $\Rightarrow$ (i) in Theorem \ref{3.4} by putting $B:=C$.

(vi) $\Rightarrow$ (i) is clear by Theorem \ref{3.4} by putting $B:=R$.

(viii) $\Rightarrow$ (i) is clear by Theorem \ref{3.4} by putting $B:=C$ and $C:=R$.

Finally, the implications (iii) $\Rightarrow$ (iv), (v) $\Rightarrow$ (vi) and  (vii) $\Rightarrow$ (viii) are obvious.
\end{proof}

\section{Cotorsion Theories}

We begin this section by recalling the definition of cotorsion theories.

Let $\mathscr{F}$ be a class of $R$-modules. Let $\mathscr{F}^{\perp}$ denote the class of all $R$-modules $L$ such that
$\Ext^1_R(F,L)=0$ for all $F\in \mathscr{F}$. Also, let $^{\perp}\mathscr{F}$ denote the class of all $R$-modules $L$ such
that $\Ext^1_R(L,F)=0$ for all $F \in \mathscr{F}$.

\begin{definition}  A pair $(\mathscr{F},\mathscr{C})$ of classes of $R$-modules is said to be a {\it cotorsion theory},
if $\mathscr{F}^{\perp}=\mathscr{C}$ and $^{\perp}{\mathscr{C}}=\mathscr{F}$.
\end{definition}

\begin{theorem}\label{4.3} Let $C$ be a  semidualizing module for $R$. The following are equivalent:
\begin{itemize}
\item[(i)] $C$ is projective.
\item[(ii)] $(\mathscr{F}_C (R),\mathscr{F}_C(R)^{\perp})$ is a cotorsion theory.
\item[(iii)] $\mathscr{F}(R)=\mathscr{F}_C (R)$.
\item[(iv)] $\mathscr{F}(R)\subseteq \mathscr{F}_C (R)$.
\item[(v)] $(\mathscr{P}_C (R), \mathscr{P} _C (R)^{\perp})$ is a cotorsion theory.
\item[(vi)] $\mathscr{P}(R)=\mathscr{P} _C (R)$.
\item[(vii)] $\mathscr{P} (R)\subseteq \mathscr{P} _C (R)$.
\item[(viii)] $(^{\perp} \mathscr{I} _C (R), {\mathscr{I}} _C(R))$ is a cotorsion theory.
\item[(ix)] $\mathscr{I} (R)= \mathscr{I} _C (R)$.
\item[(x)] $\mathscr{I} (R)\subseteq \mathscr{I} _C (R)$.
\end{itemize} Furthermore, if $R$ is local, then the above conditions are equivalent to the following:
\begin{itemize}
\item[(xi)] $C\cong R$.
\end{itemize}
\end{theorem}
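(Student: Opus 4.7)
The plan is to treat condition (i) as the hub: I will show that (i) implies all of (ii)--(x), and then close each loop with a short argument that any individual one of (ii)--(x) already forces $C$ to be projective. The local addendum (xi) is then a quick byproduct.

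For the forward direction (i) $\Rightarrow$ (iii), (vi), (ix), the key is that a projective semidualizing module is invertible. Indeed, Lemma \ref{3.1} together with \cite[Corollary 2.2.5]{6} gives $C \otimes_R \Hom_R(C, R) \cong R$. Consequently, for any flat $M$,
\[
M \;\cong\; C \otimes_R \bigl(\Hom_R(C, R) \otimes_R M\bigr),
\]
and $\Hom_R(C, R) \otimes_R M$ is flat, giving $\mathscr{F}(R) \subseteq \mathscr{F}_C(R)$; the reverse containment is immediate since the tensor product of two flats is flat. The identical argument with projectives in place of flats yields $\mathscr{P}(R) = \mathscr{P}_C(R)$, and the adjunction
\[
I \;\cong\; \Hom_R\bigl(C \otimes_R \Hom_R(C, R),\, I\bigr) \;\cong\; \Hom_R\bigl(C,\, \Hom_R(\Hom_R(C, R), I)\bigr),
\]
combined with the fact that $\Hom_R(\Hom_R(C, R), I)$ is injective, gives $\mathscr{I}(R) = \mathscr{I}_C(R)$. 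Then (iv), (vii), (x) follow trivially from the corresponding equalities, and the three pairs in (ii), (v), (viii) collapse to the classical cotorsion pairs for flats (Bican--El Bashir--Enochs), projectives and injectives, each known to be a cotorsion theory.

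For the reverse implications the crucial elementary observation is that $R \in {}^{\perp}\mathscr{C}$ for \emph{any} class $\mathscr{C}$ since $\Ext^1_R(R, -) = 0$, and dually every injective lies in $\mathscr{F}^{\perp}$ for any class $\mathscr{F}$. Hence the cotorsion-theory hypothesis (ii) forces $R \in {}^{\perp}(\mathscr{F}_C(R)^{\perp}) = \mathscr{F}_C(R)$, so $R \cong C \otimes_R F$ with $F$ flat, and the remark at the end of Section 2 delivers $[C] \approx [R]$; (v) similarly gives $R \in \mathscr{P}_C(R)$, hence $[C] \approx [R]$ by definition; and (viii) places a faithfully injective $E$ into $\mathscr{I}_C(R)$, at which point Lemma \ref{3.1}(vi)$\Rightarrow$(i) applies. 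The inclusion conditions (iv), (vii), (x) yield (i) by the same three arguments, now using directly that $R$ is flat, that $R$ is projective, and that any faithfully injective module is injective.

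For the local addendum, a projective module over a local ring is free, so $C \cong R^n$; the semidualizing condition then forces $R \cong \Hom_R(C, C) \cong R^{n^2}$, whence $n = 1$ and $C \cong R$; the converse is trivial. The only real obstacle in the argument is bookkeeping through ten equivalent conditions: the mathematical substance is concentrated in the invertibility identity $C \otimes_R \Hom_R(C, R) \cong R$ for the forward direction and in the observation that $R$ lies in every left Ext-orthogonal for the reverse.
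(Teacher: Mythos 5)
Your proposal is correct and takes essentially the same route as the paper: the reverse implications are the paper's arguments (use $\Ext^1_R(R,-)=0$ to get $R\in{}^{\perp}(\mathscr{X}^{\perp})$, place a faithfully injective module in $\mathscr{I}_C(R)$, then invoke Lemma \ref{3.1}), and your forward direction simply re-derives the equalities $\mathscr{F}(R)=\mathscr{F}_C(R)$, $\mathscr{P}(R)=\mathscr{P}_C(R)$, $\mathscr{I}(R)=\mathscr{I}_C(R)$ from the invertibility isomorphism $C\otimes_R\Hom_R(C,R)\cong R$ instead of quoting Lemma \ref{3.2}, before appealing to the classical flat, projective and injective cotorsion theories exactly as the paper does. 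One small imprecision: from $R\cong C\otimes_RP$ with $P$ projective, $[C]\approx[R]$ does not follow \emph{by definition} (the Picard action uses finitely generated rank-one projectives), but since projective modules are flat this is repaired by the remark at the end of Section 2, i.e. \cite[Proposition 4.1.4]{6}, exactly as you argued in the flat case.
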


\begin{proof} The proof for the projective parts is similar to the flat parts, and so we only prove the flat parts.

(i) $\Rightarrow$ (ii) is clear by Lemma \ref{3.2} and \cite[Lemma 3.4.1]{10}. Note that, by Lemma \ref{3.1}, $C$
is projective if and only if $[C]\approx [R]$.

(ii) $\Rightarrow$ (i) We have $R \in  ^{\perp}(\mathscr{F}_C(R)^{\perp})=\mathscr{F}_C(R)$. Hence, $R\cong C\otimes_RF$
for some flat $R$-module $F$, and so $[C]\approx [R]$ by the equivalence (i) $\Leftrightarrow$ (ii) in \cite[Proposition 4.1.4]{6}.
Now, Lemma \ref{3.1} implies that $C$ is projective.

(i) $\Rightarrow$ (iii) holds by Lemma \ref{3.2}.

(iii) $\Rightarrow$ (iv) is obvious.

(iv) $\Rightarrow$ (i) This can be shown by the same proof as the proof of (ii) $\Rightarrow$ (i).

(i) $\Rightarrow$ (viii) follows by Lemma \ref{3.2}, because clearly $(^{\perp}\mathscr{I}(R),{\mathscr{I}}(R))$
is a cotorsion theory.

(viii) $\Rightarrow$ (i) We have $\mathscr{I} (R)\subseteq (^{\perp} {\mathscr{I}}_C (R))^{\perp}=\mathscr{I}_C (R)$.
Let $E$ be a faithfully injective $R$-module. Then $E\in \mathscr{I}_C(R)$, and so $C$ is projective by Lemma \ref{3.1}.

(i) $\Rightarrow$ (ix) is clear by Lemma \ref{3.2}.

(ix) $\Rightarrow$ (x) is obvious.

(x) $\Rightarrow$ (i) This can be shown by the same proof as the proof of (viii) $\Rightarrow$ (i).

Now, assume that $R$ is local. Then, it is known that $C$ is projective if and only if $C\cong R$.
\end{proof}

\begin{definition} Let $C$ be a semidualizing module for $R$ and $n$ a non-negative integer.
\begin{itemize}
\item[(i)] $\mathscr{P}^n(R):=\{M\mid \pd_R M\leq n \}$.
\item[(ii)] $\mathscr{P}_C^n(R):=\{M\mid \mathscr{P} _C\text{-}\pd_R(M)\leq n \}$.
\end{itemize}
\end{definition}

Now, we present our last result.

\begin{theorem}\label{4.6} Let $C$ be a  semidualizing module for $R$ and $n$ a natural number.
The following are equivalent:
\begin{itemize}
\item[(i)] $C$ is projective.
\item[(ii)] $(\mathscr{P}_C^n(R),\mathscr{P}_C^n(R)^{\perp})$ is a cotorsion theory.
\item[(iii)] $\mathscr{P}^n(R)= \mathscr{P}_C^n(R)$.
\item[(iv)] $\mathscr{P}^n(R)\subseteq \mathscr{P}_C^n(R)$.
\end{itemize}
Furthermore, if $R$ is local, then the above conditions are equivalent to the following:
\begin{itemize}
\item[(v)] $C\cong R$.
\end{itemize}
\end{theorem}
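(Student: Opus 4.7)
The plan is to prove the equivalences via two short cycles: $(\text{i})\Rightarrow(\text{iii})\Rightarrow(\text{iv})\Rightarrow(\text{i})$ for the dimension equalities, and $(\text{i})\Rightarrow(\text{ii})\Rightarrow(\text{i})$ for the cotorsion-theoretic condition. The unifying observation—already the engine of the proof of Theorem \ref{4.3}—is Lemma \ref{3.1}(iv)$\Rightarrow$(i): to conclude that $C$ is projective it suffices to show that $\mathscr{P}_C\text{-}\pd_R(R)<\infty$. Each converse implication below will be forced by the fact that the free module $R$ belongs to every class in sight.

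For $(\text{i})\Rightarrow(\text{iii})$, Lemma \ref{3.1} converts $C$ projective into $[C]\approx[R]$, and then Lemma \ref{3.2}(vii) (applied with $B:=R$) yields $\mathscr{P}_C\text{-}\pd_R(M)=\pd_R(M)$ for every $R$-module $M$; hence $\mathscr{P}_C^n(R)=\mathscr{P}^n(R)$. The step $(\text{iii})\Rightarrow(\text{iv})$ is trivial, while for $(\text{iv})\Rightarrow(\text{i})$ one notes that $R$ is projective, so $R\in\mathscr{P}^n(R)\subseteq\mathscr{P}_C^n(R)$, giving $\mathscr{P}_C\text{-}\pd_R(R)\leq n<\infty$, and Lemma \ref{3.1} applies. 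For the cotorsion cycle, $(\text{i})\Rightarrow(\text{ii})$ follows because under (iii) the class $\mathscr{P}_C^n(R)$ collapses to $\mathscr{P}^n(R)$, and the latter is the left half of a well-known cotorsion theory—the same type of citation to \cite[Lemma 3.4.1]{10} used in the proof of Theorem \ref{4.3} applies. For $(\text{ii})\Rightarrow(\text{i})$, projectivity of $R$ gives $\Ext^1_R(R,L)=0$ for every $L$, hence $R\in{}^{\perp}(\mathscr{P}_C^n(R)^{\perp})$; the cotorsion hypothesis then forces $R\in\mathscr{P}_C^n(R)$, i.e.\ $\mathscr{P}_C\text{-}\pd_R(R)\leq n$, and Lemma \ref{3.1} finishes.

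The local addendum is immediate from Lemma \ref{3.1}(i)$\Leftrightarrow$(vii): if $R$ is local with maximal ideal $\fm$ and $C$ is projective, then $C\cong C_{\fm}\cong R_{\fm}\cong R$. I do not anticipate a genuine obstacle here—every implication is a one-line reduction to Lemma \ref{3.1}, Lemma \ref{3.2}, or the classical cotorsion theory for bounded projective dimension—so the only step that warrants care is verifying that the cited cotorsion-theoretic reference indeed covers the class $\mathscr{P}^n(R)$ of modules of projective dimension at most $n$ for each fixed $n$, not merely the class $\mathscr{P}(R)$ of projective modules itself.
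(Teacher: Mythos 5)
Your proof is correct and follows essentially the same route as the paper: each converse implication is forced by $R$ lying in the relevant class, which gives $\mathscr{P}_C\text{-}\pd_R(R)\le n$ and hence projectivity of $C$ via Lemma \ref{3.1}, while the forward directions come from Lemma \ref{3.2}. The only difference is the external reference for (i)$\Rightarrow$(ii): the paper cites \cite[Theorem 4.2]{1} (for the cotorsion theory attached to modules of projective dimension at most $n$) rather than Xu's \cite[Lemma 3.4.1]{10}, which is precisely the point you flagged about needing the result for $\mathscr{P}^n(R)$ and not merely for $\mathscr{P}(R)$.
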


\begin{proof} (i) $\Rightarrow$ (ii) is clear by Lemma \ref{3.2} and \cite[Theorem 4.2]{1}.

(ii) $\Rightarrow$ (i) We have, $R \in ^{\perp}(\mathscr{P}_C^n(R)^{\perp})=\mathscr{P}_C^n(R)$. Hence
$\mathscr{P}_C\text{-}\pd_R(R)\leq n$, and so $C$ is projective by Lemma \ref{3.1}.

(i) $\Rightarrow$ (iii) is clear by Lemma \ref{3.2}.

(iii) $\Rightarrow$ (iv) is obvious

(iv) $\Rightarrow$ (i) This can be shown by the same proof as the proof of (ii) $\Rightarrow$ (i).

The last assertion is obvious.
\end{proof}

%%%%%%%%%%%%%%%%%%%%%%%%%%%%%%%%%%%%%%%%%%%%%%%%%%%%%

\end{document}